\begin{document}
\newcommand{\RR}{\ensuremath{{\mathbb R}}}
\newcommand{\ee}{\mathrm{e}}
\newcommand{\ACloc}{\mathrm{AC}_{\mathrm{loc}}}
\newtheorem{theorem}{Theorem}
\newtheorem{lemma}{Lemma}
\newtheorem{corollary}{Corollary}
\newtheorem{remark}{Remark}
\title{The order of Lebesgue constant of Lagrange interpolation on several intervals}
\author{A.~L.~Lukashov\thanks{Research is supported by RFBR-TUBITAK project No.14-01-91370/113F369}\\
Faith University\\
34500 Buyukcekmece Istanbul (Turkey)\\
E-mail: alexey.lukashov@gmail.com\\
and\\
Saratov State University\\
410012 Astrakhanskaya, 83 Saratov (Russian Federation)\\
\and
J.~Szabados\\
Alfr\'ed R\'enyi Institute of Mathematics\\
P.O.B. 127, H-1364 Budapest (Hungary)\\
E-mail: szabados.jozsef@renyi.mta.hu}
\date{}
\maketitle
\begin{abstract}
We consider Lagrange interpolation on the set of finitely many intervals. This problem is closely related to the least deviating polynomial from zero on such sets. We  will obtain lower and upper estimates for the corresponding Lebesgue constant. The case of two intervals of equal lengths is simpler, and an explicit construction for  two non-symmetric intervals will be given  only in a special case.

\end{abstract}
{\it Keywords: Lagrange interpolation, Lebesgue constant and function, polynomial least deviating from zero}

\section{Lower estimate}

Let

\begin{equation*}
-1=a_0<b_0<a_1<b_1<\dots<a_s<b_s=1
\end{equation*}
be a finite partition of the interval $[-1,1]$, and let
\begin{equation}
\label{set}
E_s:=\bigcup_{i=0}^s [a_i,b_i]
\end{equation}
be the corresponding set of pairwise disjoint intervals. Consider the Lagrange interpolation on the nodes
\begin{equation}
\label{nodes}
(-1\le)x_n<x_{n-1}<\cdots<x_1(\le1),\qquad x_k=x_{k,n}\in E_s,\quad k=1,2,\dots,n\,,
\end{equation}
and let $\omega_n(x):=\prod_{k=1}^n(x-x_k)$. The Lebesgue function of interpolation is defined as
\begin{equation*}
\lambda_n(x,\omega_n,E_s):=\sum_{k=1}^n|\ell_k(x)|\,,
\end{equation*}
where
\begin{equation*}
\ell_k(x):=\frac{\omega_n(x)}{\omega_n'(x_k)(x-x_k)}\,.
\end{equation*}
Our purpose is to estimate the Lebesgue constant
\begin{equation*}
\lambda_n(\omega_n,E_s):=\Vert\lambda_n(x,\omega_n,E_s)\Vert_{E_s}\,,
\end{equation*}
where $\Vert\cdot\Vert_{(\cdot)}$ is the supremum norm of the function over the set indicated.

The classic result of Faber \cite{faber} says that for any set of nodes (\ref{nodes}) we have
\begin{equation*}
\lambda_n(\omega_n,E_0)\ge c\log n
\end{equation*}
with some absolute constant $c>0$. Our first result states that this holds in the more general situation (\ref{set}) as well.

\begin{theorem}
\label{thm1}
For any system of nodes {\rm(\ref{nodes})} we have
\begin{equation*}
\lambda_n(\omega_n,E_s)\ge c(E_s)\log n
\end{equation*}
with a constant $c(E_s)>0$ depending on the partition {\rm(\ref{set})}.
\end{theorem}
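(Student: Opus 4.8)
The plan is to reduce the problem to the classical one-interval case, by localising to an interval of $E_s$ that carries a positive proportion of the nodes. By the pigeonhole principle one of the $s+1$ intervals, say $I:=[a_{i_0},b_{i_0}]$, contains at least $N:=\lceil n/(s+1)\rceil$ of the nodes {\rm(\ref{nodes})}; list them as $\xi_1>\dots>\xi_N$ and factor $\omega_n(x)=\tilde\omega(x)\,q(x)$, where $\tilde\omega(x):=\prod_{j=1}^N(x-\xi_j)$ and $q(x):=\prod_{x_k\notin I}(x-x_k)$. If $\tilde\ell_k$ denotes the $k$th fundamental function of $\tilde\omega$, then $\ell_k(x)=\tilde\ell_k(x)\,q(x)/q(\xi_k)$ for each node $\xi_k\in I$ (note $q(\xi_k)\neq0$, since the zeros of $q$ lie outside $I$), whence
\[
\lambda_n(x,\omega_n,E_s)\ \ge\ \sum_{k=1}^N|\tilde\ell_k(x)|\,\frac{|q(x)|}{|q(\xi_k)|}\qquad(x\in I).
\]

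The second step is to get rid of the factor $q$ on a suitable part of $I$. Since all zeros of $q$ lie outside $I$, the function $\log|q|$ is concave on $I$; let $c\in I$ be a point where $|q|$ attains its maximum over $I$. Then $|q|$ is nondecreasing on $[a_{i_0},c]$ and nonincreasing on $[c,b_{i_0}]$, so $|q(x)|\ge|q(\xi_k)|$ whenever $x\in[a_{i_0},c]$ and $\xi_k\le x$, and whenever $x\in[c,b_{i_0}]$ and $\xi_k\ge x$. Hence
\[
\lambda_n(x,\omega_n,E_s)\ \ge\ \sum_{\xi_k\le x}|\tilde\ell_k(x)|\quad(x\in[a_{i_0},c]),\qquad
\lambda_n(x,\omega_n,E_s)\ \ge\ \sum_{\xi_k\ge x}|\tilde\ell_k(x)|\quad(x\in[c,b_{i_0}]).
\]
The median node $\xi_{\lceil N/2\rceil}$ belongs to one of the two subintervals $[a_{i_0},c]$, $[c,b_{i_0}]$; fix that one and call it $J$. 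Then $J$ contains at least $\lceil N/2\rceil\ge n/(2(s+1))$ of the nodes, and on $J$ the full Lebesgue function $\lambda_n(\cdot,\omega_n,E_s)$ dominates the corresponding one-sided partial sum of $|\tilde\ell_k|$.

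What remains is to bound this one-sided sum below by a multiple of $\log n$. After an affine map sending $I$ onto $[-1,1]$, this amounts to the following one-sided, localised form of Faber's estimate: \emph{if a system of $M$ nodes in $[-1,1]$ has its $\lceil M/2\rceil$ smallest members in a subinterval $[-1,\gamma]$, then $\sup_{x\in[-1,\gamma]}\sum_{x_k\le x}|\ell_k(x)|\ge c\log M$, and symmetrically with left and right interchanged.} Applying this with $M=N\ge n/(s+1)$ and absorbing the finitely many small $n$ into the constant yields $\lambda_n(\omega_n,E_s)\ge c(E_s)\log n$ (the $s$-dependence is harmless, since $\log(n/(s+1))\ge\tfrac12\log n$ once $n\ge(s+1)^2$). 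I expect this one-sided, localised estimate to be the main obstacle. It does not follow formally from the stated inequality $\lambda_n(\omega_n,E_0)\ge c\log n$, which only exhibits a point where the two-sided Lebesgue function is large, with no control over which half of the nodes produces the logarithm or over the location of that point. It should, however, be attainable by carrying out the usual proof of Faber's inequality while tracking these two extra features — the side of the node set relative to the evaluation point, and the distance of that point from the endpoints of the interval — at the price of a worse absolute constant.
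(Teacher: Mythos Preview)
Your reduction (pigeonhole to a single interval $I$, then the log-concavity of the external factor $|q|$ on $I$ to kill it on one half of that interval) is correct and neat, but the argument stops at precisely the hard step: the ``one-sided, localised Faber estimate'' you formulate at the end is left unproven. This is a genuine gap, and as you yourself acknowledge, it does not follow from Faber's theorem, which gives no control over where the large value of the Lebesgue function occurs or over which half of the node set produces it. Filling this gap would require essentially reproving a fragment of the Erd\H{o}s--V\'ertesi theorem by hand.

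The paper's proof takes a completely different and much shorter route, invoking Erd\H{o}s--V\'ertesi directly: for any node system in $[-1,1]$ and any $\varepsilon>0$, the pointwise Lebesgue function $\sum_k|\ell_k(x)|$ exceeds $c(\varepsilon)\log n$ for every $x\in[-1,1]$ outside an exceptional set $H$ of measure $|H|<\varepsilon$. Since $\sum_k|\ell_k(x)|$ is the same expression whether one regards the interpolation as living on $E_s$ or on $[-1,1]$, choosing $\varepsilon<|E_s|=\sum_i(b_i-a_i)$ forces $E_s\setminus H\neq\emptyset$, and the theorem follows in two lines---no localisation, no factoring of $\omega_n$, no one-sided sums. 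The paper explicitly remarks that ``the statement sounds like an obvious consequence of the quoted result of Faber, but in fact we need a strengthening of that result''; this is exactly the obstruction you identified, and Erd\H{o}s--V\'ertesi is that strengthening. Once cited, it makes your entire reduction unnecessary. Your approach, if completed, would trade a black-box citation for a self-contained argument, but at the cost of a substantial technical lemma that you have not supplied.
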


\begin{proof} The statement sounds like an obvious consequence of the quoted result of Faber, but in fact we need a strengthening of that result. Consider our interpolation process on the whole interval $[-1,1]$, and apply the deep result of Erd\H os and V\'ertesi \cite{ev} which says that for any system of nodes on $E_0$ and any $\varepsilon>0$, there exists a set $H\subset[-1;1]$ of measure $|H|<\varepsilon$ such that
\begin{equation*}
\lambda_n(x,\omega_n,E_0)\ge c(\varepsilon)\log n\quad{\mathrm for}\quad x\in E_0\setminus H\,.
\end{equation*}
Choosing $\varepsilon<\sum_{i=0}^s(b_i-a_i)$ we get the statement of the theorem.
\end{proof}

\section{Upper estimate in the general case}

First we present a result which shows that the lower estimate expressed in Theorem 1 can be achieved by a suitably chosen set of nodes.
 To prove it we need a result of the first named author \cite{luk} about Lebesgue constants for special interpolation processes by rational functions on several intervals.

 \begin{theorem}
 \label{thmA}
 {\rm(Lukashov \cite{luk})}
 Let $ \{a_{k,n}\}_{k=1,n=s}^{\infty,\infty}\subset\{|z|<r\},\;r<1, $ be a regular matrix of inverse values of poles with respect to $ E_s, $ i.e.

 $$\sum_{k=1}^{n}\omega(a_{k,n}^{-1},[a_j,b_j],\mathbb{C}\backslash E_s)\in\mathbb{N},\quad j=0,1,\ldots, s,$$
 where $ \omega(z,\alpha,\Omega) $ denotes the harmonic measure of the set $ \alpha\subset\partial\Omega $ with respect to the domain $ \Omega $ with pole at $ z ,$ $ 1/0=\infty.$ Then the roots of the Chebyshev-Markov rational function
 \begin{equation}
\label{rational}
\tilde\omega_n(x):=\frac{\omega_n(x)}{\prod_{k=1}^n(1-a_{k,n}x)}
\end{equation}
deviating least from zero on $ E_s $ (see {\rm\cite[Theorem 3]{l2}}) are of the form {\rm(\ref{nodes})} and
\begin{equation*}
\tilde\lambda_n(\tilde\omega_n,E_s):=\left\Vert\sum_{k=1}^n\left|\frac{\tilde\omega_n(x)}{\tilde\omega_n'(x_k)(x-x_k)}\right|\right\Vert_{E_s}\le c(E_s)\log n\,.
\end{equation*}
 \end{theorem}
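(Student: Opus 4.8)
The plan is to reduce the claim to the classical estimate $O(\log n)$ for the Lebesgue constant of Chebyshev interpolation on one interval, after transporting that estimate to $E_s$ by potential theory. First I would put $\Omega:=\overline{\mathbb C}\setminus E_s$ and form the Green function $G_n(z):=\sum_{k=1}^n g_\Omega(z,a_{k,n}^{-1})$, a positive harmonic function on $\Omega$, vanishing on $E_s$, with logarithmic poles at the $n$ points $a_{k,n}^{-1}$. The period of the harmonic conjugate of $g_\Omega(\cdot,w)$ around $[a_j,b_j]$ equals $\pm2\pi\,\omega(w,[a_j,b_j],\Omega)$, so the regularity hypothesis says exactly that the harmonic conjugate $\widetilde G_n$ of $G_n$ has all of its periods in $2\pi\mathbb Z$; hence $B_n(z):=\exp\bigl(-(G_n(z)+i\widetilde G_n(z))\bigr)$ is single-valued, a finite Blaschke product for $\Omega$, analytic there, with zeros precisely at the $a_{k,n}^{-1}$, with $|B_n|<1$ on $\Omega$ and $|B_n|=1$ on $E_s$. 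Writing $\psi_n(x):=\arg B_n(x)=-\widetilde G_n(x)$ for $x\in E_s$, the theory of Chebyshev--Markov fractions (\cite{l2}) then identifies $L_n^{-1}\widetilde\omega_n$, $L_n:=\|\widetilde\omega_n\|_{E_s}$, with $\tfrac12(B_n+B_n^{-1})$, so that
\[
|\widetilde\omega_n(x)|=L_n\,|\cos\psi_n(x)|\qquad(x\in E_s),
\]
and its zeros $x_k$, being the solutions of $\cos\psi_n=0$ on $E_s$, are of the form (\ref{nodes}) and lie in the interiors of the intervals.

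Next I would pin down the size of $\psi_n'$, which is the heart of the matter. By the Cauchy--Riemann equations, for $x\in E_s$,
\[
|\psi_n'(x)|=\partial_\nu G_n(x)=\sum_{k=1}^n\partial_\nu g_\Omega(x,a_{k,n}^{-1}),
\]
$\partial_\nu$ denoting the inner normal derivative. Since every $a_{k,n}^{-1}$ lies in $\{|z|>1/r\}$, a compact subset of $\Omega$ bounded away from $E_s$ uniformly in $n$ and in the admissible matrices, a (boundary) Harnack-type comparison gives $\partial_\nu g_\Omega(\cdot,a_{k,n}^{-1})\asymp\partial_\nu g_\Omega(\cdot,\infty)\asymp\rho$, where $\rho$ is the density of the equilibrium measure of $E_s$ and the constants depend only on $E_s$ and $r$; summing over $k$,
\[
|\psi_n'(x)|\asymp n\,\rho(x),\qquad x\in E_s.
\]
Since $\rho$ is bounded below on compact subsets of the interiors and has $|x-e|^{-1/2}$ singularities at the $2(s+1)$ endpoints $e$ of $E_s$, the nodes obey the usual Chebyshev distribution law: consecutive nodes in the interior of an interval are $\asymp1/n$ apart, the $m$-th node counted from an endpoint is at distance $\asymp(m/n)^2$ from it, and $\sum_{k=1}^n 1/\bigl(n\rho(x_k)\bigr)=O_{E_s}(1)$.

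With this in hand I would estimate the fundamental functions. Differentiating $|\widetilde\omega_n|=L_n|\cos\psi_n|$ at an (interior) node $x_k$ gives $|\widetilde\omega_n'(x_k)|=L_n|\psi_n'(x_k)|\asymp L_n\,n\,\rho(x_k)$, hence for $x\in E_s$
\[
\left|\frac{\widetilde\omega_n(x)}{\widetilde\omega_n'(x_k)(x-x_k)}\right|\asymp\frac{|\cos\psi_n(x)|}{n\,\rho(x_k)\,|x-x_k|}.
\]
To bound $\sum_k$ of this at a fixed $x$ in a component $I_i=[a_i,b_i]$ of $E_s$, I would split according to whether $x_k\in I_i$ or not. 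For $x_k$ in a different component, $|x-x_k|$ exceeds the smallest gap length $\delta(E_s)>0$, so that part is $\le\delta(E_s)^{-1}\sum_k 1/\bigl(n\rho(x_k)\bigr)=O_{E_s}(1)$. For $x_k\in I_i$, the substitution $x=\tfrac{a_i+b_i}{2}+\tfrac{b_i-a_i}{2}\cos\phi$ (so that $\rho(x)\asymp\bigl((b_i-a_i)\sin\phi\bigr)^{-1}$ and $d\psi_n/d\phi\asymp n$) turns that part into
\[
\sum_{x_k\in I_i}\left|\frac{\widetilde\omega_n(x)}{\widetilde\omega_n'(x_k)(x-x_k)}\right|\lesssim\sum_{\phi_k}\frac{\sin\phi_k}{n\,|\cos\phi-\cos\phi_k|},
\]
with the $\phi_k$ essentially $\pi/n$-separated in $[0,\pi]$ and $\cos\psi_n$ vanishing at each $\phi_k$ --- precisely the sum arising in the classical proof that Chebyshev interpolation on one interval has Lebesgue constant $O(\log n)$. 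Estimating it in the standard way --- the two nodes nearest $x$ contribute $O(1)$ via $|\cos\psi_n(x)|\lesssim n\,|\phi-\phi_{k_0}|$, the remaining ones a $\sum 1/|k-k_0|$-type series --- gives $O(\log n)$. Adding the $O_{E_s}(1)$ from the other components and taking the supremum over $x\in E_s$ yields $\widetilde\lambda_n(\widetilde\omega_n,E_s)\le c(E_s)\log n$.

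I expect the principal obstacle to be the potential-theoretic input of the second paragraph: establishing the representation $|\widetilde\omega_n|=L_n|\cos\psi_n|$ together with the two-sided bound $|\psi_n'|\asymp n\rho$, with the correct $|x-e|^{-1/2}$ blow-up retained near the endpoints and with constants uniform in $n$ and in the choice of the regular matrix. Confining the poles to $\{|z|<r\}$ with a fixed $r<1$ is exactly what makes this uniformity available --- it keeps the $a_{k,n}^{-1}$ in a fixed compact subset of $\Omega$ --- but verifying the boundary Harnack comparisons near the $2(s+1)$ endpoints, and justifying that the zeros $x_k$ really lie in the interiors of the intervals (needed for the differentiation step and ultimately a consequence of the equioscillation characterisation of $\widetilde\omega_n$), are the delicate points. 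Once $\psi_n$ is under control, everything downstream is a routine transcription of the one-interval Chebyshev argument.
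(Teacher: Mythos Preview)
The paper does not prove this theorem. It is stated with the attribution ``(Lukashov \cite{luk})'' and is quoted as an established result from an external reference; the present paper then invokes it as a black box in the proof of Theorem~\ref{thm2}. There is therefore no proof here against which to compare your proposal.

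For what it is worth, your outline follows the natural potential-theoretic route and is consistent with the machinery the cited sources provide: the single-valuedness of $B_n$ under the regularity hypothesis, the identification $L_n^{-1}\widetilde\omega_n=\tfrac12(B_n+B_n^{-1})$, and the two-sided bound $|\psi_n'|\asymp n\rho$ (a Bernstein--Markov inequality for rational functions, essentially \cite{l2}) are indeed the ingredients one expects. Your reduction of the same-interval contribution to the classical Chebyshev Lebesgue sum is the standard endgame, and you have correctly identified the genuine work---the uniform boundary comparisons near the $2(s+1)$ endpoints and the interior location of the zeros---as the points requiring care. None of this, however, can be checked against the paper at hand.
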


\begin{theorem}
\label{thm2}
For any finite partition {\rm(\ref{set})} of the interval $[-1,1]$, there exists a set of nodes {\rm(\ref{nodes})} such that
\begin{equation*}
\lambda_n(\omega_n,E_s)\le c(E_s)\log n
\end{equation*}
with a constant $c(E_s)>0$ depending on the partition {\rm(\ref{set})}.
\end{theorem}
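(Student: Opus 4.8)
The plan is to deduce Theorem \ref{thm2} from Theorem \ref{thmA} by exhibiting, for each $n$, an admissible choice of the matrix $\{a_{k,n}\}$ that makes the rational Chebyshev-Markov denominator harmless, so that the rational Lebesgue constant bound transfers to the polynomial one. The first step is to choose the inverse pole values so that the regularity (integer harmonic measure) condition in Theorem \ref{thmA} is satisfied while keeping all the $a_{k,n}$ inside a fixed disc $\{|z|<r\}$ with $r<1$. Concretely, I would let all the poles be at a single point (or a bounded set of points) far from $E_s$ — say $a_{k,n}=0$ for all but a bounded number of indices $k$, with the remaining finitely many poles adjusted to force $\sum_{k=1}^n \omega(a_{k,n}^{-1},[a_j,b_j],\mathbb{C}\setminus E_s)\in\mathbb{N}$ for each $j=0,1,\dots,s$. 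Since the harmonic measures $\omega(\infty,[a_j,b_j],\mathbb{C}\setminus E_s)$ are fixed numbers summing to $1$, and each perturbation of a pole changes the vector of harmonic measures continuously, one can correct the (at most $s$) fractional parts using $O(1)$ extra poles; this is the bookkeeping step and should be routine, though it is where one must be slightly careful to stay in a fixed disc uniformly in $n$.

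The second step is the key comparison. With the poles confined to $\{|z|<r\}$, the factor $D_n(x):=\prod_{k=1}^n(1-a_{k,n}x)$ satisfies $0<c_1\le |D_n(x)|\le c_2$ for $x\in E_s\subset[-1,1]$, with constants depending only on $r$ (hence only on $E_s$), because each factor $|1-a_{k,n}x|$ lies between $1-r$ and $1+r$ — and in fact, since most $a_{k,n}=0$, the product is essentially bounded by a fixed constant. Writing $\tilde\omega_n(x)=\omega_n(x)/D_n(x)$, a direct computation gives
\begin{equation*}
\frac{\omega_n(x)}{\omega_n'(x_k)(x-x_k)}=\frac{\tilde\omega_n(x)}{\tilde\omega_n'(x_k)(x-x_k)}\cdot\frac{D_n(x_k)}{D_n(x)}\cdot\frac{1}{1+\tilde\omega_n(x_k)\bigl(D_n^{-1}\bigr)'(x_k)\big/\tilde\omega_n'(x_k)}\,,
\end{equation*}
but more simply: since $\tilde\omega_n(x_k)=0$, one has $\tilde\omega_n'(x_k)=\omega_n'(x_k)/D_n(x_k)$, so
\begin{equation*}
\ell_k(x)=\frac{\omega_n(x)}{\omega_n'(x_k)(x-x_k)}=\frac{D_n(x_k)}{D_n(x)}\cdot\frac{\tilde\omega_n(x)}{\tilde\omega_n'(x_k)(x-x_k)}\,.
\end{equation*}
Taking absolute values and summing over $k$ yields $\lambda_n(x,\omega_n,E_s)\le (c_2/c_1)\,\tilde\lambda_n(x,\tilde\omega_n,E_s)$ pointwise on $E_s$, and then Theorem \ref{thmA} gives $\lambda_n(\omega_n,E_s)\le c(E_s)\log n$. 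Thus the nodes $x_k=x_{k,n}$ guaranteed by Theorem \ref{thmA} are precisely the ones asserted by Theorem \ref{thm2}.

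The main obstacle I anticipate is not the comparison estimate — which is elementary once the poles are bounded away from the unit circle — but rather verifying that the regularity condition of Theorem \ref{thmA} can be met for \emph{every} $n$ with poles staying in one fixed disc, and checking that Theorem \ref{thmA} as quoted genuinely produces nodes on $E_s$ of the ordered form (\ref{nodes}) with $x_k\in E_s$. One should confirm that the ``roots of the Chebyshev-Markov rational function deviating least from zero on $E_s$'' are real, simple, and lie in $E_s$ — this is exactly the content of the cited \cite[Theorem 3]{l2} and \cite{luk}, so it may be invoked directly. If the single-point choice $a_{k,n}\equiv 0$ already happens to satisfy the integrality condition for infinitely many $n$ (which occurs when the harmonic measures are rational), the argument simplifies further; in general the $O(1)$ correction described above handles the remaining $n$.
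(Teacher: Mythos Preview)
Your proposal is correct and follows essentially the same route as the paper: set all but $s$ of the $a_{k,n}$ equal to $0$, perturb the remaining $s$ to enforce the integer harmonic-measure condition of Theorem~\ref{thmA} (the paper invokes \cite[Theorems~4--5]{l2} for exactly this step), and then compare the polynomial and rational fundamental functions via $\ell_k(x)=\dfrac{D_n(x_k)}{D_n(x)}\cdot\dfrac{\tilde\omega_n(x)}{\tilde\omega_n'(x_k)(x-x_k)}$, bounding the ratio by $\bigl(\tfrac{2}{1-r}\bigr)^{s}$ since only $s$ factors in $D_n$ are nontrivial. Your anticipated obstacle---that the $s$ correcting poles can be kept in a fixed disc $\{|z|<r\}$ uniformly in $n$---is precisely the point the paper delegates to \cite{l2,l3}, so you have identified both the argument and its one nontrivial citation correctly.
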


\begin{proof} This is an easy consequence of former theorem. In the proof of \cite[Theorem 5]{l2} (which is not included in \cite{l2}, but is quite analogous to the proof of \cite[Theorem 4]{l2}, compare also \cite{l3}) it was noted that to any given complex numbers $a_{k,n}$ with $|a_{k,n}|<r\;(k=1,\dots,n)$ which are symmetric with respect to the real line  we can always find $\tilde a_{k,n}$ with  $|\tilde{a}_{m,n}|<r\; (m=n-s+1,\ldots, n)$  such that all suppositions of Theorem 2 are satisfied and $ |a_{m,n}-\tilde{a}_{m,n}|\;(m=n-s+1,\ldots, n) $ are arbitrarily small. Thus, choosing $a_{k,n}=0\;(k=1,\dots,n)$ and finding the corresponding $\tilde a_{m,n}$ with $|\tilde{a}_{m,n}|<r\;(m=n-s+1,\ldots, n) $  we obtain
\begin{equation*}
\begin{split}
c(E_s)\log n &\geq\sum_{k=1}^n\left\vert\frac{\tilde\omega_n(x)}{\tilde\omega_n'(x_k)(x-x_k)}\right\vert\\
&=\sum_{k=1}^{n}\left\vert\frac{\omega_n(x)}{\omega_n'(x_k)(x-x_k)}\right\vert\cdot\left\vert
\frac{\prod_{m=n-s+1}^{n}(1-\tilde a_{m,n}x_k)}{\prod_{m=n-s+1}^n(1-\tilde a_{m,n}x)}\right\vert\\
&\geq\frac{(1-r)^s}{2^s}\sum_{k=1}^n\left\vert\frac{\omega_n(x)}{\omega_n'(x_k)(x-x_k)}\right\vert,\qquad x\in E_s\,.
\end{split}
\end{equation*}
\end{proof}

Note that Theorem 3 gives a construction of the set of nodes with optimal order of the Lebesgue constant which is not explicit. Namely to find $\tilde a_{m,n}$ one has to solve (for $ s>2 $) a system of non-linear equations, where $\tilde a_{m,n}$ are included in hyperelliptic integrals. Then $ \omega_n(x) $ are expressed as hyperelliptic integrals and their zeros $ x_{k,n} $ are obtained as solutions of algebraic equations of $ n $th degree.  The construction of Theorem 3 can be explained in more explicit form in the special case $ E_2=[-1,a]\cup[b,1]. $

First of all we have (see, for example, \cite{luk,totik})

$$ \omega(\infty,\delta,\mathbb{C}\backslash E_2)=\frac{1}{\pi}\int_{\delta}\frac{|x-c|dx}{\sqrt{|H(x)|}}, $$
where $ \delta\subset E_2, $ $ H(x)=(x^2-1)(x-a)(x-b), $

$$c=\left.\int_a^b\frac{xdx}{\sqrt{H(x)}}\right/\int_a^b\frac{dx}{\sqrt{H(x)}},$$
and

$$\omega(\alpha,\delta,\mathbb{C}\backslash E_2)=\frac{1}{\pi}\int_{\delta}\frac{|x-c(\alpha)|\sqrt{|H(\alpha)|}dx}{\sqrt{|H(x)|}|x-\alpha|\cdot |\alpha-c(\alpha)|}, $$
where $ \alpha\in\mathbb{R}\backslash E_2, $

$$c(\alpha)=\left.\int_a^b\frac{xdx}{\sqrt{H(x)}|x-\alpha|}\right/\int_a^b\frac{dx}{\sqrt{H(x)}|x-\alpha|}.$$
Hence we can choose $ \tilde{a}_{n,n}=\alpha_n $ as a unique solution for $ \alpha\in(1,+\infty) $ of the equation

$$\frac{n-1}{\pi}\int_b^1\frac{(x-c)dx}{\sqrt{-H(x)}}+\frac{1}{\pi}\int_b^1\frac{(x-c(\alpha))\sqrt{H(\alpha)}dx}
{\sqrt{-H(x)}(\alpha-x)(\alpha-c(\alpha))}$$

$$=1+\left[\frac{n}{\pi}\int_b^1\frac{(x-c)dx}{\sqrt{-H(x)}}\right], $$
and the set of nodes $ x_k $ is defined by the equations

$$ (n-1)\int\limits_{[-1,x_k]\cap E_2}\frac{|x-c|dx}{\sqrt{-H(x)}}+\int\limits_{[-1,x_k]\cap E_2}\frac{|x-c(\alpha_n)|\sqrt{H(\alpha_n)}dx}{\sqrt{-H(x)}(\alpha_n-x)(\alpha_n-c(\alpha_n))}$$

$$=k\pi-\frac{\pi}{2},\quad k=1,\ldots,n.$$
To compute these elliptic integrals is possible with using elliptic functions (compare \cite[Theorem 2]{lukMN}).

\section{Upper estimate for two symmetric intervals}

Theorem 3 is very general, but it does not provide a concrete set of nodes for  the optimal order of Lebesgue constant. In what follows we try to handle the special case of two intervals, i.e. let
\begin{equation*}
E(a):=[-1,-a]\cup [a,1], \qquad 0<a<1\,.
\end{equation*}
With a slightly different notation for the nodes (\ref{nodes}) we have

\begin{theorem}
\label{thm3}
Consider the system of nodes on $E(a)$
\begin{equation}
\label{tnodes}
x_{\pm k}:=\pm\sqrt{\frac{1-a^2}2y_k+\frac{1+a^2}2},\quad where\quad y_k=\cos\frac{2k-1}{2n}\pi,\quad k=1,\dots,n\,,
\end{equation}
and denote $\omega_{2n}(x):=\prod_{1\le|k|\le n}(x-x_k)$. Then we have
\begin{equation*}
\lambda_{2n}(\omega_{2n},E(a))\le\frac1a\Lambda_n+\frac{1-a^2}{8a^2}\,,
\end{equation*}
where $\Lambda_n\sim\frac2\pi\log n$ is the $n^{th}$ Lebesgue constant in $[-1,1]$ for the Chebyshev nodes.
\end{theorem}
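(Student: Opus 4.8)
The plan is to reduce everything to classical Chebyshev interpolation on $[-1,1]$ via the quadratic substitution that produces the nodes. Set $\varphi(x):=\frac{2x^2-(1+a^2)}{1-a^2}$. This maps $E(a)$ onto $[-1,1]$ (two‑to‑one, with $\varphi(\pm1)=1$, $\varphi(\pm a)=-1$), satisfies $\varphi(x_{\pm k})=y_k$, and—crucially—$x^2-x_k^2=\frac{1-a^2}2\bigl(\varphi(x)-y_k\bigr)$. Hence $\omega_{2n}(x)=\prod_{k=1}^n(x^2-x_k^2)=\bigl(\tfrac{1-a^2}2\bigr)^n\prod_{k=1}^n\bigl(\varphi(x)-y_k\bigr)=c\,T_n(\varphi(x))$ for a constant $c$, where $T_n$ is the degree‑$n$ Chebyshev polynomial whose zeros are the $y_k$. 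I would then introduce the fundamental functions $t_k(y):=\frac{T_n(y)}{T_n'(y_k)(y-y_k)}$ of Chebyshev interpolation at $y_1,\dots,y_n$, for which $\sum_{k=1}^n|t_k(y)|\le\Lambda_n$ on $[-1,1]$ by definition of $\Lambda_n$.

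Next I would express $\ell_{\pm k}$ through the $t_k$. Using $\omega_{2n}'(x)=c\,T_n'(\varphi(x))\varphi'(x)$ with $\varphi'(x)=\frac{4x}{1-a^2}$, together with $\varphi(x)-y_k=\frac{2(x-x_k)(x+x_k)}{1-a^2}$ and the identity $T_n(\varphi(x))=t_k(\varphi(x))\,T_n'(y_k)\bigl(\varphi(x)-y_k\bigr)$, substitution into $\ell_{\pm k}(x)=\frac{\omega_{2n}(x)}{\omega_{2n}'(x_{\pm k})(x-x_{\pm k})}$ and cancellation (the constant $c$ drops out) yields the clean identities
\begin{equation*}
\ell_k(x)=\frac{x+x_k}{2x_k}\,t_k(\varphi(x)),\qquad
\ell_{-k}(x)=-\frac{x-x_k}{2x_k}\,t_k(\varphi(x)),\qquad k=1,\dots,n .
\end{equation*}
By the symmetry of $E(a)$ and of the node set the Lebesgue function $\lambda_{2n}(\cdot,\omega_{2n},E(a))$ is even, so it suffices to estimate it for $x\in[a,1]$, where $x,x_k\in[a,1]$ and $\varphi(x)\in[-1,1]$.

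I would then split $\lambda_{2n}(x,\omega_{2n},E(a))=\sum_{k=1}^n|\ell_k(x)|+\sum_{k=1}^n|\ell_{-k}(x)|$ into the ``near'' nodes in $[a,1]$ and the ``far'' nodes in $[-1,-a]$. For the near part, $\frac{x+x_k}{2x_k}=\frac12+\frac{x}{2x_k}\le\frac12+\frac1{2a}\le\frac1a$, so $\sum_{k=1}^n|\ell_k(x)|\le\frac1a\sum_{k=1}^n|t_k(\varphi(x))|\le\frac1a\Lambda_n$. For the far part, with $y:=\varphi(x)$ I would use $|x-x_k|=\frac{|x^2-x_k^2|}{x+x_k}=\frac{(1-a^2)|y-y_k|}{2(x+x_k)}$ together with $x+x_k\ge2a$ and $x_k\ge a$ to get $\frac{|x-x_k|}{2x_k}\le\frac{(1-a^2)|y-y_k|}{8a^2}$, hence
\begin{equation*}
\sum_{k=1}^n|\ell_{-k}(x)|\le\frac{1-a^2}{8a^2}\sum_{k=1}^n|y-y_k|\,|t_k(y)|=\frac{1-a^2}{8a^2}\,|T_n(y)|\sum_{k=1}^n\frac1{|T_n'(y_k)|}.
\end{equation*}

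The step I expect to require the most care is showing this far contribution is bounded \emph{independently of $n$}, which is what produces the clean additive constant $\frac{1-a^2}{8a^2}$ rather than another $O(\log n)$ term. Here one uses $|T_n'(y_k)|=n/\sqrt{1-y_k^2}$, the summation formula $\sum_{k=1}^n\sin\bigl((2k-1)\theta\bigr)=\frac{\sin^2 n\theta}{\sin\theta}$ with $\theta=\frac\pi{2n}$, and $\sin t\ge\frac2\pi t$ on $[0,\frac\pi2]$, to obtain $\sum_{k=1}^n\frac1{|T_n'(y_k)|}=\frac1n\sum_{k=1}^n\sin\frac{(2k-1)\pi}{2n}=\frac1{n\sin\frac\pi{2n}}\le1$; combined with $|T_n(y)|\le1$ the far sum is at most $\frac{1-a^2}{8a^2}$. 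Adding the two estimates gives $\lambda_{2n}(\omega_{2n},E(a))\le\frac1a\Lambda_n+\frac{1-a^2}{8a^2}$. (I note in passing that keeping $\max(x,x_k)$ instead of splitting shows $|\ell_k(x)|+|\ell_{-k}(x)|=\frac{\max(x,x_k)}{x_k}\,|t_k(\varphi(x))|\le\frac1a|t_k(\varphi(x))|$, which already yields $\lambda_{2n}(\omega_{2n},E(a))\le\frac1a\Lambda_n$; the additive term above is merely the cost of the cruder but more transparent decomposition.)
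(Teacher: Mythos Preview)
Your proof is correct and follows essentially the same route as the paper: the quadratic substitution $y=\varphi(x)$, the identification $\omega_{2n}=cT_n\circ\varphi$, and the near/far split into nodes in $[a,1]$ versus nodes in $[-1,-a]$, leading to the same two sums and the same final bound. The only difference is cosmetic: where the paper bounds the denominators $|x-x_k|$ and $|\omega_{2n}'(x_k)|$ from below, you first derive the exact identities $\ell_{\pm k}(x)=\pm\frac{x\pm x_k}{2x_k}\,t_k(\varphi(x))$ and then bound the coefficients---a cleaner bookkeeping of the same computation. Your closing parenthetical, using $|\ell_k|+|\ell_{-k}|=\frac{\max(x,x_k)}{x_k}|t_k(\varphi(x))|\le\frac1a|t_k(\varphi(x))|$, is a genuine (if minor) improvement over the paper: it shows the additive constant $\frac{1-a^2}{8a^2}$ is actually unnecessary and $\lambda_{2n}(\omega_{2n},E(a))\le\frac1a\Lambda_n$ holds outright.
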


\begin{proof}
Evidently
\begin{equation*}
\omega_{2n}(x):=T_n\left(\frac{2x^2-1-a^2}{1-a^2}\right)
\end{equation*}
(where $T_n(x)=\cos n\arccos x$ is the $n$th Chebyshev polynomial). In estimating the Lebesgue function, we may assume that $a\le x\le1$, by symmetry. Introducing the notation
\begin{equation}
\label{y}
y:=\frac{2x^2-1-a^2}{1-a^2}
\end{equation}
we can easily see that
\begin{equation*}
|x-x_k|\ge
\begin{cases}
\frac{1-a^2}4\cdot|y-y_k|, & \mathrm{if}\ k=1,2,\dots,n \\
2a, & \mathrm{if}\ k=-1,-2,\dots,-n
\end{cases}
\qquad(a\le x\le1)\,.
\end{equation*}
Also,
\begin{equation*}
|\omega_{2n}'(x_k)|\ge\frac{4a}{1-a^2}\cdot|T_n'(y_k)|,\qquad k=\pm1,\pm2,\dots,\pm n\,.
\end{equation*}
Thus we obtain
\begin{equation*}
\begin{split}
\lambda_{2n}(x,\omega_{2n},E(a))&=\sum_{k=-n}^n\left\vert\frac{\omega_{2n}(x)}{\omega_{2n}'(x_k)(x-x_k)}\right\vert\\
&\le\frac{1-a^2}{8a^2}\sum_{k=1}^n\frac1{|T_n'(y_k)|}+\frac1a\sum_{k=1}^n\left\vert\frac{T_n(y)}{T_n'(y_k)(y-y_k)}\right\vert\\
&\le\frac{1-a^2}{8a^2n}\sum_{k=1}^n\sin\frac{2k-1}{2n}\pi+\frac1a\Lambda_n\\
&\le\frac{1-a^2}{8a^2}+\Lambda_n\,.
\end{split}
\end{equation*}
\end{proof}

\begin{remark}
Since $ \omega(\infty,[-1,-a],\mathbb{C}\backslash E(a))=1/2, $ a weaker estimate
$$ \lambda_{2n}(\omega_{2n},E(a))\leq C(a)\log n $$
follows from Theorem 2.
\end{remark}

The case of odd number of nodes on $E(a)$ can be settled by using the following simple

\begin{lemma}
\label{lem1}
Assume that for a system of nodes {\rm(\ref{nodes})} we have $-1<x_n<\dots<x_1<1,\quad x_k\in E_s,\;k=1,\dots,n$, and let
\begin{equation*}
\Omega_{n+1}(x):=(1+x)\omega_n(x)\qquad and\qquad\Omega_{n+2}(x):=(1-x^2)\omega_n(x)\,.
\end{equation*}
If
\begin{equation*}
|\omega_n(1)|=\Vert\omega_n\Vert_{E_s}\qquad or\qquad|\omega_n(\pm1)|=\Vert\omega_n\Vert_{E_s}\,.
\end{equation*}
then
\begin{equation*}
\lambda_{n+1}(\Omega_{n+1},E_s)\le3\lambda_n(\omega_n,E_s)+1\qquad or\qquad
\lambda_{n+2}(\Omega_{n+2},E_s)\le5\lambda_n(\omega_n,E_s)+1\,,
\end{equation*}
respectively.
\end{lemma}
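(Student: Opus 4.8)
The plan is to add one or two extra nodes at the endpoints $\pm 1$ (which lie in $E_s$) and control how the Lebesgue function changes. Write $\Omega_{n+1}(x)=(1+x)\omega_n(x)$, so the new node system is $\{x_1,\dots,x_n,-1\}$. For the fundamental polynomials attached to the old nodes $x_k$, the new one is $\ell_k^{*}(x)=\ell_k(x)\cdot\frac{1+x}{1+x_k}$, where $\ell_k$ is the old fundamental polynomial. On $E_s\subset[-1,1]$ we have $0\le 1+x\le 2$ and, since $x_k\in E_s$, $1+x_k\ge b_0-a_0>0$; but more to the point we want a clean bound, so I would instead compare $\frac{1+x}{1+x_k}$ against a constant. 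The estimate $|1+x|\le 2$ is immediate, and $|1+x_k|\ge$ the distance of the nearest node to $-1$; to get the stated constant $3$ we need a cruder but node-free argument.

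The cleaner route — and the one suggested by the hypothesis $|\omega_n(1)|=\|\omega_n\|_{E_s}$ — is to handle the fundamental polynomial for the new node $-1$ directly and bound the rest crudely. The new fundamental function for the node $-1$ is $\ell_{-1}^{*}(x)=\frac{\omega_n(x)}{\omega_n(-1)}$, whose sup norm on $E_s$ is exactly $1$ by hypothesis (here we would use $|\omega_n(\pm1)|=\|\omega_n\|_{E_s}$, or in the first case note $(1-x)$ is bounded so a one-sided endpoint value suffices). That accounts for the "$+1$". For the remaining sum $\sum_{k=1}^n|\ell_k^{*}(x)|=\sum_k|\ell_k(x)|\cdot\frac{1+x}{1+x_k}$, I would split: when $x$ is bounded away from $-1$ this is harmless, and the factor $\frac{1+x}{1+x_k}$ is where one must be careful when $x_k$ is close to $-1$. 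The trick is to write $\frac{1+x}{1+x_k}=1+\frac{x-x_k}{1+x_k}$ and then $\left|\ell_k(x)\cdot\frac{x-x_k}{1+x_k}\right|=\left|\frac{\omega_n(x)}{\omega_n'(x_k)(1+x_k)}\right|$; summing over $k$ and using $|\omega_n(x)|\le\|\omega_n\|_{E_s}=|\omega_n(-1)|$ together with the partial-fraction identity $\sum_k\frac{1}{\omega_n'(x_k)(x_k+1)}=\frac{1}{\omega_n(-1)}$ (with appropriate signs) gives that this extra piece is bounded by $1$ in absolute value, hence by $2$ after triangle inequality with the raw $\sum|\ell_k(x)|\le\lambda_n$. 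Collecting: $\lambda_{n+1}(\Omega_{n+1},E_s)\le \lambda_n(\omega_n,E_s)+\big(\text{extra piece}\le 2\lambda_n+\dots\big)$, and a careful bookkeeping yields the claimed $3\lambda_n+1$.

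For the second assertion, $\Omega_{n+2}(x)=(1-x^2)\omega_n(x)$ corresponds to adding both endpoints $\pm1$. One can either iterate the first part twice — once to pass from $\omega_n$ to $(1-x)\omega_n$ (the mirror construction, using $|\omega_n(-1)|=\|\omega_n\|$ need not hold, which is why the hypothesis here is the symmetric one $|\omega_n(\pm1)|=\|\omega_n\|_{E_s}$) and once more to multiply by $(1+x)$ — tracking that each step multiplies the Lebesgue constant by at most a fixed factor and adds a bounded term, or do it in one shot. Doing it in one shot: the new fundamental functions for the endpoints are $\frac{(1-x)\omega_n(x)}{2\,\omega_n(-1)}$ and $\frac{(1+x)\omega_n(x)}{2\,\omega_n(1)}$, each of sup norm $\le 1$ on $E_s$ by the hypothesis, contributing the "$+1$" (their sum, actually $\le 1$ after combining, or $\le 2$ trivially — the sharper "$+1$" comes from noting $\frac{1-x}{2}+\frac{1+x}{2}=1$). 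For the old nodes the multiplier is $\frac{1-x^2}{1-x_k^2}$; the same expansion $\frac{1-x^2}{1-x_k^2}=1+\frac{(x-x_k)(x+x_k)+(\text{stuff})}{1-x_k^2}$ — more precisely write $1-x^2=(1-x_k^2)-(x-x_k)(x+x_k)$ — reduces everything to $\sum_k\left|\frac{(x+x_k)\omega_n(x)}{\omega_n'(x_k)(1-x_k^2)}\right|$, which since $|x+x_k|\le 2$ and $|\omega_n(x)|\le\|\omega_n\|_{E_s}=|\omega_n(\pm1)|$ is bounded via the partial-fraction identities for $\pm1$ by a constant (here one gets $2\cdot 2=4$ from the two identities, giving $\lambda_{n+2}\le\lambda_n+4\lambda_n+1=5\lambda_n+1$).

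The main obstacle is the bookkeeping around the multiplier $\frac{1+x}{1+x_k}$ (resp. $\frac{1-x^2}{1-x_k^2}$) when $x_k$ sits very near an endpoint, where $\frac{1}{1+x_k}$ blows up; the hypothesis $|\omega_n(\pm1)|=\|\omega_n\|_{E_s}$ is exactly what tames this, because it lets us replace a pointwise bound on $\frac{\omega_n(x)}{\omega_n(\pm1)}$ (namely $\le 1$) and invoke the partial-fraction residue identity $\sum_k\frac{1}{\omega_n'(x_k)(x_k\mp 1)}=\mp\frac{1}{\omega_n(\pm1)}$ to sum the dangerous terms exactly rather than term by term. Getting the constants down to the stated $3$ and $5$ (rather than some larger absolute constant) will require being slightly careful to combine the endpoint fundamental functions using $\frac{1-x}{2}+\frac{1+x}{2}=1$ and to use $|x+x_k|\le 2$, $|1+x|\le 2$ rather than looser bounds.
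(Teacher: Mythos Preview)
Your decomposition matches the paper's exactly: split off the endpoint fundamental polynomials (which together contribute at most $1$ via $\tfrac{1-x}{2}+\tfrac{1+x}{2}=1$ and the hypothesis $|\omega_n(\pm1)|=\|\omega_n\|_{E_s}$), then write $1-x^2=(1-x_k^2)-(x-x_k)(x+x_k)$ to reduce the remaining sum to $\sum_k|\ell_k(x)|$ plus the ``dangerous'' term
\[
\sum_{k=1}^n\frac{|x+x_k|\,|\omega_n(x)|}{(1-x_k^2)\,|\omega_n'(x_k)|}\,.
\]

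The gap is in how you bound this last sum. You invoke the partial-fraction residue identity
\(\sum_k\frac{1}{\omega_n'(x_k)(x_k\mp1)}=\mp\frac{1}{\omega_n(\pm1)}\)
and say it lets you ``sum the dangerous terms exactly.'' But this is a \emph{signed} identity: the $\omega_n'(x_k)$ alternate in sign, so the identity tells you nothing about the absolute-value sum $\sum_k\frac{1}{|\omega_n'(x_k)|(1\mp x_k)}$, which is what you actually need. There can be arbitrarily large cancellation in the signed sum while the absolute sum is of order $\lambda_n$, not $O(1)$.

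What the paper does instead is recognise the dangerous sum as a Lebesgue-function value. After using $|x+x_k|\le2$ and $|\omega_n(x)|\le|\omega_n(\pm1)|$, split by the sign of $x_k$: for $x_k\ge0$ one has $1-x_k^2\ge1-x_k$, so the $k$th term is at most $\dfrac{|\omega_n(1)|}{(1-x_k)|\omega_n'(x_k)|}=|\ell_k(1)|$; for $x_k<0$ it is at most $|\ell_k(-1)|$. Hence each half-sum is bounded by $\lambda_n(\omega_n,E_s)$, giving the $4\lambda_n$ you wrote down --- but for the right reason. Replace the partial-fraction appeal with this observation and your argument goes through with the stated constants.
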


\begin{proof}
We prove only the second statement; the first one is simpler. We have
\begin{equation*}
\begin{split}
\lambda_{n+2}(x,\Omega_{n+2}, E_s)=&\sum_{k=1}^n\frac{1-x^2}{1-x_k^2}\cdot\left|\frac{\omega_n(x)}{\omega_n'(x_k)(x-x_k)}\right|\\
&+\left(\frac{1+x}{|\omega_n(1)|}+\frac{1-x}{|\omega_n(-1)|}\right)\frac{|\omega_n(x)|}2\\
\le&\sum_{k=1}^n\left|\frac{\omega_n(x)}{\omega_n'(x_k)(x-x_k)}\right|+2\sum_{k=1}^n\frac{|\omega_n(x)|}{(1-x_k^2)|\omega_n'(x_k)|}+1\\
\le&\lambda_n(\omega_n,E_s)+2\sum_{x_k\ge0}\frac{|\omega_n(1)|}{(1-x_k)|\omega'_n(x_k)|}\\
&+2\sum_{x_k<0}\frac{|\omega_n(-1)|}{(1+x_k)|\omega'_n(x_k)|}+1\\
\le&5\lambda_n(\omega_n,E_s)+1.
\end{split}
\end{equation*}
\end{proof}

This lemma together with Theorem \ref{thm3} yields the following
\begin{corollary}
\label{cor1}
For each $n\ge1$ there exists a system of nodes such that
\begin{equation*}
\lambda_n(\omega_n,E(a))\le\frac3a\Lambda_{n/2}+\frac{1-a^2}{8a^2}+c
\end{equation*}
with some absolute constant $c>0$,
\end{corollary}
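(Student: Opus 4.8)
The plan is to combine Theorem~\ref{thm3} with Lemma~\ref{lem1}, so the whole argument is essentially bookkeeping on parities. First I would observe that the explicit nodes \eqref{tnodes} on $E(a)$ produce $\omega_{2n}(x)=T_n\!\left(\frac{2x^2-1-a^2}{1-a^2}\right)$, and that this function attains its sup-norm on $E(a)$ at the endpoints $x=\pm1$, since those correspond to $y=1$ where $|T_n(y)|=1=\Vert T_n\Vert_{[-1,1]}$. Hence the hypothesis $|\omega_{2n}(\pm1)|=\Vert\omega_{2n}\Vert_{E(a)}$ of Lemma~\ref{lem1} is satisfied for this particular $\omega_{2n}$, which is exactly what is needed to pass from an even number of nodes to an odd (or to $2n+2$) number of nodes while only losing a multiplicative constant.

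Next I would split into cases according to the residue of $n$ modulo $2$. If $n=2m$ is even, Theorem~\ref{thm3} directly gives a system of nodes with $\lambda_n(\omega_n,E(a))\le\frac1a\Lambda_m+\frac{1-a^2}{8a^2}\le\frac3a\Lambda_{n/2}+\frac{1-a^2}{8a^2}$, so there is nothing to do (the factor $3$ and the additive constant $c$ give room to spare). If $n=2m+1$ is odd, I would start from the $2m$-node system of Theorem~\ref{thm3}, whose $\omega_{2m}$ satisfies $|\omega_{2m}(\pm1)|=\Vert\omega_{2m}\Vert_{E(a)}$ as noted above, hence also $|\omega_{2m}(1)|=\Vert\omega_{2m}\Vert_{E(a)}$; then the first conclusion of Lemma~\ref{lem1} with $\Omega_{2m+1}(x)=(1+x)\omega_{2m}(x)$ yields a system of $2m+1=n$ nodes with
\begin{equation*}
\lambda_n(\omega_n,E(a))\le 3\lambda_{2m}(\omega_{2m},E(a))+1\le \frac3a\Lambda_m+\frac{3(1-a^2)}{8a^2}+1\le\frac3a\Lambda_{n/2}+\frac{1-a^2}{8a^2}+c
\end{equation*}
after absorbing the extra terms into the absolute constant $c$ (here one uses $\Lambda_m=\Lambda_{\lfloor n/2\rfloor}\le\Lambda_{n/2}$ since $\Lambda_n$ is nondecreasing). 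One should check that the node $-1$ adjoined to the $x_{\pm k}$ of \eqref{tnodes} keeps the nodes distinct and inside $E(a)$, which is immediate since $-1\in E(a)$ and $-1<x_{-n}$.

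The main (minor) obstacle is purely one of interpretation: Lemma~\ref{lem1} as stated requires the nodes to lie strictly inside $(-1,1)$ with $x_k\in E_s$, whereas adjoining $-1$ puts a node at the endpoint; but this is harmless because $\pm1\in E(a)$ and the Lebesgue function computation in the proof of Lemma~\ref{lem1} goes through verbatim with an endpoint node (the factor $(1+x)/\lvert\omega_{2m}(1)\rvert$ or $(1+x)$ in $\Omega_{2m+1}$ is bounded on $E(a)$ exactly as before). Alternatively one may simply cite the first statement of Lemma~\ref{lem1} as a black box. A secondary point is the precise meaning of $\Lambda_{n/2}$ for odd $n$: I would read it as $\Lambda_{\lfloor n/2\rfloor}$ (or note that by monotonicity of $\Lambda_k$ it does not matter), so that the stated bound holds uniformly in $n\ge1$ with a single absolute constant $c$. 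No deeper difficulty arises; everything else is the routine estimate $\frac{3(1-a^2)}{8a^2}+1\le\frac{1-a^2}{8a^2}+c$ for a suitable absolute $c$, which fails for $a$ near $0$ unless $c$ is allowed to depend on $a$ — so in fact I would either keep the coefficient $\frac{1-a^2}{8a^2}$ with a genuinely absolute $c$ by writing the bound as $\le\frac3a\Lambda_{\lfloor n/2\rfloor}+\frac{3(1-a^2)}{8a^2}+1$ and noting $3\le$ some constant times whatever, or (cleanest) simply absorb the $a$-dependent terms together, i.e. replace ``$\frac{1-a^2}{8a^2}+c$'' by the understanding that the additive part is $\le C(a)$; this matches the informal statement of the corollary.
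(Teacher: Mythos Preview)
Your proposal is correct and follows precisely the route the paper indicates: the paper offers no detailed proof at all, merely saying ``This lemma together with Theorem~\ref{thm3} yields the following,'' and your case split on parity together with the verification that $|\omega_{2m}(\pm1)|=\Vert\omega_{2m}\Vert_{E(a)}$ is exactly how that sentence is meant to be unpacked. Your observation that the bound one actually obtains is $\frac{3}{a}\Lambda_{\lfloor n/2\rfloor}+\frac{3(1-a^2)}{8a^2}+1$, so that the additive term $\frac{1-a^2}{8a^2}+c$ with an \emph{absolute} $c$ is a slight overstatement of the corollary, is a fair catch; the paper is simply loose with the constants here.
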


\section{Upper estimate for two nonsymmetric intervals}

Now we consider the case of two nonsymmetric intervals, i.e.~when
\begin{equation*}
E(a,b):=[-1,a]\cup[b,1],\qquad-1<a<b<1,\quad a+b\neq 0\,.
\end{equation*}

First of all we note that for $ \omega(\infty,[-1,a],\mathbb{C}\backslash E(a,b))=1/2\pm 1/6 $ we have, taking into account \cite[Theorem 3]{l2},\cite{apt}, as in Remark 1,
\begin{equation}
\label{star}
\lambda_{3n}(\omega_{3n},E(a,b))\leq C(a,b)\log n
\end{equation}
for $ \omega_{3n}(x)=T_n(P_3(x)), $ where $ P_3(x) $ is the cubic polynomial such that $ E(a,b)$ is the inverse image of $P_3(x)$ in $[-1,1]$. For a given $ a,-1<a<1/2, $ the existence of $ b, $ such that $  \omega(\infty,[-1,a],\mathbb{C}\backslash E(a,b))=2/3 $ follows from \cite[Theorem 3.1]{pehsch} (or from  \cite{robinson} similarly to  \cite[Lemma 1]{bog}). We want to present here an explicit construction of $ P_3(x) $ and to give a different proof of the estimate (\ref{star}). Note also that the construction and the proof do not use potential theory.
\begin{theorem}
\label{thm4}
Let $-1<a<\frac12$ and
\begin{equation}
\label{b}
b=\frac{2(1-a)(1+z)+z(a+z)}{2-a+z}
\end{equation}
where $z$ is the unique solution of the equation
\begin{equation}
\label{z}
z^3+(3-2a)z^2+(a^2-2)z-a^2+2a-2=0
\end{equation}
in the interval $(-1,a)$. Then $a<b<1$, and for the cubic polynomial
\begin{equation}
\label{cp}
\begin{split}
p(x):=& \frac{(x-z)(x-a)(x-1)}{2(1+z)(1+a)}-\frac{(x^2-1)(x-a)}{(1-z^2)(z-a)}\\
& -\frac{(x^2-1)(x-z)}{(1-a^2)(z-a)}+\frac{(x+1)(x-z)(x-a)}{2(1-z)(1-a)},
\end{split}
\end{equation}
we have
\begin{equation}
\label{p}
p(-1)=p(a)=p(b)=-1,\qquad p(z)=p(1)=1\quad\mathrm{and}\quad p'(z)=0\,.
\end{equation}
Moreover, for the set of nodes $\omega_{3n}=\prod_{k=1}^{3n}(x-x_k),$
\begin{equation}
\label{n}
(-1<)x_1<\dots<x_n<(z<)x_{n+1}<\dots<x_{2n}
\end{equation}
\begin{equation*}
<(a<b<)x_{2n+1}<\dots<x_{3n}(<1)
\end{equation*}
in $E(a,b)$ defined by
\begin{equation}
p(x_k)=p(x_{k+n})=p(x_{k+2n})=\cos\frac{2k-1}{2n}\pi,\qquad k=1,2,\dots,n
\end{equation}
we have
\begin{equation}
\label{opt}
\lambda_n(\omega_{3n},E(a,b))\le c(a)\log n\,,
\end{equation}
where $c(a)>0$ is a constant depending on $a$.
\end{theorem}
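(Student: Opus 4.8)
\textit{Verification of (\ref{p}).} The expression (\ref{cp}) is precisely the Lagrange interpolation polynomial of the data $-1,1,-1,1$ at the nodes $-1,z,a,1$: each summand is, up to sign, the corresponding fundamental polynomial of degree $\le3$ (after writing $(x+1)(x-1)=x^2-1$ in the middle two). Hence $\deg p=3$ and $p(-1)=p(a)=-1$, $p(z)=p(1)=1$ hold for every $z$. Now write $p(x)-1=A(x-z)(x-1)(x-r)$ with $A$ the leading coefficient of $p$ and $r$ its third zero; evaluating $p-1=-2$ at $x=-1$ and at $x=a$ gives two relations from which, eliminating $A$, one finds that $r=z$ (equivalently $p'(z)=0$) holds if and only if $2(1+z)^2=(a-z)^2(1-a)$, and multiplying this out produces exactly the quadratic factor $z^2+(4-2a)z+(a^2-2a+2)$ of the cubic in (\ref{z}) (the remaining factor being $z-1$). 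A sign count — the cubic in (\ref{z}) equals $2(1-a^2)>0$, $2(a^2-1)<0$, $0$ at $z=-1,a,1$, and its third zero $a-2-\sqrt{2(1-a)}$ lies below $-1$ — shows that (\ref{z}) has a unique root in $(-1,a)$; since it differs from $1$, for that root $p'(z)=0$. As $p+1$ vanishes at $-1$ and $a$, factor $p(x)+1=A(x+1)(x-a)(x-b)$; comparing the coefficient of $x^2$ with that in $A(x-z)^2(x-1)$ gives $b=2z+2-a$, which coincides with (\ref{b}) once the quadratic relation for $z$ is invoked, so in particular $p(b)=-1$. Finally $b-a=2(z+1-a)>0\iff z>a-1\iff a<\tfrac12$ (the only use of the hypothesis), whereas $1-b>0$ reduces to $(1+a)^2>0$; hence $a<b<1$.

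\textit{The Lebesgue estimate — set-up.} Since $\lambda_{3n}(\cdot,\omega_{3n},\cdot)$ is unchanged if $\omega_{3n}$ is multiplied by a nonzero constant, we may take $\omega_{3n}(x)=T_n(p(x))$. Put $y:=p(x)$. Because $p'$ is a quadratic vanishing at $z$ and at a point of $(a,b)$, $p$ maps each of $[-1,z]$, $[z,a]$, $[b,1]$ monotonically onto $[-1,1]$; hence for each $j\in\{1,\dots,n\}$ the three preimages of $y_j:=\cos\frac{2j-1}{2n}\pi$ are points $\xi_j^{(1)}\in[-1,z]$, $\xi_j^{(2)}\in[z,a]$, $\xi_j^{(3)}\in[b,1]$, and these $3n$ points are the nodes (\ref{n}). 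Using $\omega_{3n}'(\xi_j^{(i)})=T_n'(y_j)\,p'(\xi_j^{(i)})$,
\[
\lambda_{3n}(x,\omega_{3n},E(a,b))=|T_n(y)|\sum_{j=1}^n\frac1{|T_n'(y_j)|}\sum_{i=1}^3\frac1{|p'(\xi_j^{(i)})|\,|x-\xi_j^{(i)}|}\,.
\]
By symmetry it suffices to treat $x\in[-1,z]$ (for $x\in[z,a]$ reflect in $z$; the case $x\in[b,1]$ is easier, there being no critical point). The terms with $i=3$ are harmless: $|x-\xi_j^{(3)}|\ge b-z$, $|p'(\xi_j^{(3)})|\ge c_1(a)>0$, and $\sum_j|T_n'(y_j)|^{-1}=n^{-1}\sum_j\sin\frac{2j-1}{2n}\pi=O(1)$, so their total is $O(1)$. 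For $i=2$ the simple zero of $p'$ at $z$ gives $|p'(\xi_j^{(2)})|\asymp\xi_j^{(2)}-z$; combined with $|x-\xi_j^{(2)}|\ge\xi_j^{(2)}-z$ and $(\xi_j^{(2)}-z)^2\asymp1-y_j$ (read off from $1-y_j=A(\xi_j^{(2)}-z)^2(1-\xi_j^{(2)})$), this contribution is $\lesssim\frac1n\sum_j\frac{\sin\frac{2j-1}{2n}\pi}{1-y_j}=\frac1n\sum_j\cot\frac{2j-1}{4n}\pi=O(\log n)$.

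\textit{The Lebesgue estimate — the same-branch sum.} It remains to bound $\sum_j\bigl|\frac{\omega_{3n}(x)}{\omega_{3n}'(\xi_j^{(1)})(x-\xi_j^{(1)})}\bigr|$. Writing $p(x)-p(\xi)=(x-\xi)\,p[x,\xi]$ for the divided difference, this term equals the Chebyshev fundamental term $\bigl|\frac{T_n(y)}{T_n'(y_j)(y-y_j)}\bigr|$ multiplied by $\rho_j:=|p[x,\xi_j^{(1)}]|/|p'(\xi_j^{(1)})|$. From $p(x)-1=A(x-z)^2(x-1)$, setting $u:=z-x\ge0$ and $v:=z-\xi_j^{(1)}\ge0$, one gets $p[x,\xi_j^{(1)}]=A\bigl[(u^2+uv+v^2)+(1-z)(u+v)\bigr]$ and $p'(\xi_j^{(1)})=Av\bigl[3v+2(1-z)\bigr]$, so $\rho_j=\frac{(u^2+uv+v^2)+(1-z)(u+v)}{v[3v+2(1-z)]}\le1$ whenever $v\ge u$, i.e.\ whenever $\xi_j^{(1)}$ is no closer to $z$ than $x$ is; the sum over these $j$ is thus $\le\Lambda_n$. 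The remaining nodes — those with $v<u$, namely the $\asymp n\sqrt{1-y}$ nodes of this branch lying strictly between $x$ and $z$ — are where the critical point $z$ threatens the estimate: there $\rho_j\asymp u/v$ with $v$ possibly as small as $\asymp1/n$. \textbf{This is the main obstacle.} It is overcome by noting that for exactly those $j$ the Chebyshev factor compensates, because $|y-y_j|\asymp u(u-v)$; hence, using in addition $v\asymp j/n$ and that the node nearest $x$ on the $z$-side carries the largest such index $j_0$, the term is
\[
\lesssim\frac{\sin\frac{2j-1}{2n}\pi}{n\,u\,(u-v)}\cdot\frac uv=\frac{\sin\frac{2j-1}{2n}\pi}{n\,v\,(u-v)}\asymp\frac1{j_0-j}\qquad(1\le j<j_0),
\]
which sums over $j<j_0$ to $O(\log n)$, while the single term $j=j_0$ is $\le\Lambda_n$ since there $\rho_{j_0}\asymp1$. (When $x$ lies in the left part of $[-1,z]$, so the ``$v<u$'' indices run past $n/2$, only bookkeeping changes: away from $z$, $|p'|\gtrsim1$ and those nodes behave as in ordinary Chebyshev interpolation on $[-1,1]$.) Adding the three pieces gives $\lambda_{3n}(x,\omega_{3n},E(a,b))\le c(a)\log n$ uniformly in $x\in E(a,b)$, which is (\ref{opt}).
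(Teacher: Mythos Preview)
Your proof is correct. For the algebraic part you are in fact more explicit than the paper: you factor the cubic (\ref{z}) as $(z-1)\bigl(z^2+(4-2a)z+a^2-2a+2\bigr)$, obtain the closed form $b=2z+2-a$ by matching the $x^2$--coefficients of $p\pm1$, and read off $a<b<1$ directly from the hypothesis $a<\tfrac12$. The paper verifies $p(b)=-1$ and $p'(z)=0$ only ``by direct substitution \dots\ we omit the details.''

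The Lebesgue estimate, however, is organized quite differently. The paper does \emph{not} split by branch. It splits the full sum over all $3n$ nodes according to whether $|x-x_k|\le|p'(x_k)|$ or $|x-x_k|>|p'(x_k)|$. In the first region a one--line Taylor bound $|y-y_k|\le c(a)\,|p'(x_k)|\,|x-x_k|$ turns each term into the corresponding Chebyshev fundamental term, giving $O(\Lambda_n)$ at once. In the second region the crude estimate $|T_n(y)|\le1$ leaves $\sum 1/\bigl(p'(x_k)^2|T_n'(y_k)|\bigr)$, which is then split only by whether $x_k$ lies in a fixed neighbourhood $I$ of $z$ (where $|p''|\ge c$ gives $p'(x_k)^2\gtrsim 1-y_k$) or not; both pieces are $O(\log n)$ in a line each.

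Your branch--by--branch decomposition is conceptually natural --- it makes the transfer to Chebyshev interpolation on $[-1,1]$ transparent --- and your handling of the genuinely delicate case (same--branch nodes lying between $x$ and the critical point $z$) via the explicit ratio $\rho_j$ and the $u,v$ calculus is correct. But it costs you the divided--difference computation and the final $\frac1{j_0-j}$ bookkeeping. The paper's $|x-x_k|\lessgtr|p'(x_k)|$ split achieves the same compensation mechanism (a node with small $|p'(x_k)|$ is automatically pushed into $S_2$, where $p'(x_k)^2\gtrsim1-y_k$ does the work) without ever referring to branches, and is noticeably shorter.
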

Of course, a similar statement holds when switching the roles of $a$ and $b$..

\begin{proof}
The method of proof is the same as that of Theorem \ref{thm3}, but slightly more complicated.
First we check that the equation (\ref{z}) has indeed a unique solution in the interval $(-1,a)$. Denoting the left hand side of (\ref{z}) by $q(z)$, we obtain
\begin{equation*}
q(-1)=2(1-a^2)>0\qquad\mathrm{and}\qquad q(a)=-2(1-a^2)<0\,.
\end{equation*}
This shows that $q(z)$ has a single root in each interval $(-\infty,-1), (-1,a)$ and $(a,\infty)$.

Next, we show that $b$ defined in (\ref{b}) is indeed in the interval $(a,1)$. The inequalities
\begin{equation*}
a<\frac{2(1-a)(1+z)+z(a+z)}{2-a+z}<1
\end{equation*}
are equivalent to
\begin{equation*}
z^2+2(1-a)z+a^2-4a+2>0\qquad\mathrm{and}\qquad(z-a)(1+z)<0\,.
\end{equation*}
The first inequality holds for all real $z$ (since $a<1/2$), and the second holds because of $-1<z<a$.

To check the relations (\ref{p}), only $p(b)=-1$ and $p'(z)=0$ needs explanation. The first can be seen by direct substitution of (\ref{b}) into (\ref{cp}) and using the relation (\ref{z}). The second one is obtained by differentiation and using again (\ref{z}); we omit the details.

Collecting all the above information, we may say that the cubic polynomial $p(x)$ is monotone increasing in $[-1,z]$ and in $[b,1]$, and monotone decreasing in $[z,a]$. Thus the polynomial
\begin{equation*}
\omega_{3n}(x):=T_n(p(x))\,,
\end{equation*}
of degree $3n$ has $n$ roots (\ref{n}) in each of the intervals $[-1,z], [z,a], [b,1]$.
Introducing the notations
\begin{equation*}
y=p(x)\qquad\mathrm{and}\qquad y_k=y_{k+n}=y_{k+2n}=p(x_k),\quad k=1,\dots,n\,,
\end{equation*}
we estimate the Lebesgue function of Lagrange interpolation based on these nodes:
\begin{equation*}
\begin{split}
\lambda_n(x,\omega_{3n},E(a,b))=&\sum_{k=1}^{3n}\left|\frac{T_n(y)}{p'(x_k)T_n'(y_k)(x-x_k)}\right|\\
=&\sum_{|x-x_k|\le|p'(x_k)|}+\sum_{|x-x_k|>|p'(x_k)|}\\
:=&S_1+S_2\,.
\end{split}
\end{equation*}
First we estimate $S_1$. We have by Taylor expansion
\begin{equation*}
\begin{split}
|y-y_k|=&|p(x)-p(x_k)|\\
\le&|p'(x_k)|\cdot|x-x_k|+\frac12||p''||_{[-1,1]}(x-x_k)^2\\
\le&c_1(a)|p'(x_k)|\cdot|x-x_k|\,,
\end{split}
\end{equation*}
whence
\begin{equation*}
S_1\le\frac3{c_1(a)}\sum_{k=1}^n\left|\frac{T_n(y)}{T_n'(y_k)(y-y_k)}\right|\le c_2(a)\Lambda_n\,,
\end{equation*}
since the last sum can be estimated by the Lebesgue constant associated with the Chebyshev nodes.

For estimating $S_2$, denote by $I$ an interval with midpoint $z$ such that $|p''(x)|\ge c_3(a)>0$ for $x\in I$. (Such an interval exists, since the unique zero of $p''(x)$ is in the interval $(z,b)$.) Then
\begin{equation*}
S_2\le\sum_{k=1}^{3n}\frac1{p'(x_k)^2|T_n'(y_k)|}=\sum_{x_k\notin I}+\sum_{x_k\in I}:=S_{21}+S_{22}\,.
\end{equation*}
As for the first sum we have
\begin{equation*}
S_{21}\le\frac3{c_3(a)^2}\sum_{k=1}^n\frac1{|T'_n(y_k)|}\le c_4(a)\,.
\end{equation*}

Finally, we estimate $S_{22}$. We obtain
\begin{equation*}
1-y_k=p(z)-p(x_k)\le c_4(a)(z-x_k)^2\,.
\end{equation*}
Thus 
\begin{equation*}\
\begin{split}
|p'(x_k)|=&|p'(x_k)-p'(z)|\\
=&\frac12|p''(\zeta)|\cdot|x_k-z|\\
\ge&\frac{c_3(a)}{2\sqrt{c_4(a)}}\sqrt{1-y_k},\qquad \zeta\in(z,x_k)\subset I\,.
\end{split}
\end{equation*}
Hence
\begin{equation*}
S_{22}\le c_5(a)\sum_{k=1}^n\frac1{|T_n'(y_k)|(1-y_k)}\le c_6(a)\Lambda_n\,.
\end{equation*}

So we have proved the statement for degrees of the form $3n$. For nodes of the form $3n+1$ or $3n+2$ we add to the above system of nodes the point 1, or the points $\pm1$, respectively, and apply Lemma \ref{lem1}.

To complete the proof of the theorem, we have to show that $b\neq-a$. An easy calculation yields
\begin{equation*}
1+p(-a)=2a\cdot\frac{z^2+2z+2-a^2}{(1+z)(a-z)}<\frac{4a}{a-z}<0\,,
\end{equation*}
i.e.~$p(-a)<-1=p(b).$ This shows that $b>-a$.
\end{proof}
\begin{remark}
Note that for interpolation by rational functions on $E(a,b)$ one can modify the proof of Theorem 4 to obtain similar assertion for interpolation on nonsymmetric intervals by rational functions taking $$ y=\frac{2x^2-(b+a)x-1+ab}{(b+a)x-1-ab} $$ instead of (\ref{y}).
\end{remark}

\end{document}